\newtheorem{theorem}{Theorem}[section]
\newtheorem{conj}{Conjecture}
\newtheorem{lemma}[theorem]{Lemma}
\newtheorem{proposition}[theorem]{Proposition}
\newtheorem{problem}{Problem}
\theoremstyle{definition}
\newtheorem{definition}[theorem]{Definition}
\newtheorem{example}[theorem]{Example}
\newtheorem{remark}[theorem]{Remark}
\newtheorem*{solution}{Solution}
\theoremstyle{remark}
\newtheorem*{ack}{Acknowledgments}
\newcommand{\Z}{\mathds{Z}}
\newcommand{\Q}{\mathds{Q}}
\newcommand{\C}{\mathds{C}}
\newcommand{\sign}{\operatorname{sign}}
\DeclareMathOperator{\Char}{Char}
\renewcommand{\setminus}{\smallsetminus}
\DeclareSymbolFont{EulerScript}{U}{eus}{m}{n}
\DeclareSymbolFontAlphabet\mathscr{EulerScript}
\title{On the slice genus of generalized algebraic knots}
\author{Maria Marchwicka}
\address{Department of Mathematics and Computer Science, Adam Mickiewicz University in Poznań, ul. Uniwersytetu Poznańskiego 4, 61-614 Poznań, Poland}
\email{maria.marchwicka@amu.edu.pl}
\author{Wojciech Politarczyk}
\address{Institute of Mathematics, University of Warsaw, ul. Banacha 2, 02-097 Warsaw, Poland}
\email{wpolitarczyk@mimuw.edu.pl}
\begin{document}
\maketitle{}

\begin{abstract}
We give examples of a linear combination of algebraic knots and their mirrors that are algebraically slice,
but whose topological and smooth four-genus is two. Our examples generalize an example of non-slice algebraically slice
linear combination of iterated torus knots obtained by Hedden, Kirk and Livingston.
Our main tool is a genus bound from Casson--Gordon invariants and a cabling formula that allows us
to compute effectively these invariants.
\end{abstract}

\section{Introduction}
\label{sec:introduction}

A knot \(K \subset S^{3}\) is \emph{algebraic} if it arises as a link of an isolated singularity of a complex curve.
A \emph{generalized algebraic knot} (\emph{GA-knot} for short) is a knot that can be written as a connected sum of algebraic knots and their reverse mirror images.

Rudolph asked~\cite{Rudolph} whether algebraic knots are linearly independent in the smooth concordance group.
By~\cite[Corollary 8.4]{Miyazaki}, Rudolph's question can be reformulated by asking whether every smoothly slice GA-knot is ribbon.
For the sake of brevity, we will refer to the conjecture asserting the affirmative answer to Rudolph's question as the~\emph{Rudolph's conjecture}.

Probably the first result towards Rudolph's conjecture is due to Litherland~\cite{Litherland-signature}.
By studying the jumps of the Levine-Tristram signature function, he proved that torus knots are linearly independent in the smooth concordance group.
Livingston and Melvin~\cite{LivingstonMelvinAlgebraicKnots} proved that there are GA-knots that are algebraically slice but not smoothly slice, hence the Levine-Tristram signature is not sufficient to answer Rudolph's question.
Later, Hedden, Kirk, and Livingston~\cite{HeddenKirkLivingston} used Casson-Gordon invariants to show that there is an infinite family of algebraically-slice GA-knots, which are not smoothly slice.
Their approach was extended by Conway, Kim, and the second author~\cite{conway2019nonslice}.

Recall that in dimension four, there is a big difference between smooth and topological (locally flat) concordance of knots and links.
Using~\cite{Nagel-Powell}, it can be verified that Litherland's result holds also in the topological concordance group.
Similarly, it's been understood for many years that the invariance of Casson-Gordon signatures with respect to topological concordance can be deduced from the results of Freedman-Quinn~\cite{Freedman-Quinn} and Gilmer~\cite{gilmerTopologicalProofSignature1981}.
Consequently, the independence results discussed in the previous paragraph hold in the topological concordance group as well.


Examples of algebraically slice non-slice GA-knots studied in~\cite[Section 8]{HeddenKirkLivingston} have topological four-genus equal to one.
However, the main result of~\cite{conway2019nonslice} gives infinitely generated subgroups of the topological concordance group consisting of algebraically-slice GA-knots.
Therefore, it is an interesting question whether there are examples of algebraically slice GA-knots with arbitrary large topological four-genus.
We propose the following conjecture.
\begin{conj}\label{conj:main-conjecture}
  There are algebraically slice GA-knots with arbitrary large topological four-genus.
\end{conj}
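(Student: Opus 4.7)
The plan is to iterate and amplify the genus-$2$ construction of this paper. Since the examples produced here are algebraically slice GA-knots whose topological four-genus is forced to equal $2$ by a Casson--Gordon signature computed via the cabling formula, the natural approach is to build, for each $n\geq 1$, a GA-knot $K_n$ as a connected sum of mutually ``independent'' algebraically slice pieces whose Casson--Gordon obstructions stack to force $g_4^{\mathrm{top}}(K_n)\geq n$.

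First I would assemble, for each $n$, building blocks $B_1,\ldots,B_n$, each itself an algebraically slice GA-knot modelled on the paper's examples, chosen so that the orders $|H_1(\Sigma_2(B_i))|$ are pairwise coprime (for instance by varying the underlying torus-knot parameters across blocks). Setting $K_n = B_1 \# \cdots \# B_n$ preserves algebraic sliceness, and $H_1(\Sigma_2(K_n))$ splits as an orthogonal direct sum with respect to the linking form. Applying the cabling formula to a character $\chi = \chi_1 \oplus \cdots \oplus \chi_n$ of prime-power order, one on each block, the Casson--Gordon signature $\tau(K_n,\chi)$ decomposes as a sum of the individual contributions, which can be arranged to be of uniform sign. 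Combined with Gilmer's Casson--Gordon four-genus bound, this would force $g_4^{\mathrm{top}}(K_n)\geq n$.

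The main obstacle is the metabolizer analysis. Gilmer's bound requires the Casson--Gordon signature to be controlled for \emph{every} subgroup $H\leq H_1(\Sigma_2(K_n))$ that is ``nearly a metabolizer'' (in the sense that its index is bounded by a function of $g_4^{\mathrm{top}}$), and for every prime-power character vanishing on $H$. The number of candidate subgroups grows combinatorially with $n$, and although the coprimality condition forces $H$ itself to split over the blocks, such a block-wise subgroup may still admit characters that mix the blocks in a way defeating the diagonal signs chosen above. Ruling out these ``mixed'' metabolizers requires either a linking-form rigidity argument that restricts the admissible characters, or much finer control of the cabling formula for non-diagonal characters. It is this combinatorial explosion --- not the construction of the knots themselves --- that makes the conjecture genuinely difficult to settle by a direct extension of the present methods.
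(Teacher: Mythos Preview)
The statement you are addressing is a \emph{conjecture} in the paper, not a theorem: the authors do not prove it and explicitly present their main result only as ``a step towards verification'' of it. So there is no proof in the paper to compare against. Your text is, appropriately, a strategy outline rather than a proof, and you yourself identify the essential gap.

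Two concrete issues are worth flagging. First, there is an internal inconsistency: if the blocks $B_i$ have pairwise coprime $|H_1(\Sigma_2(B_i))|$, then a character of \emph{prime-power} order is nontrivial on at most one block, so $\chi = \chi_1 \oplus \cdots \oplus \chi_n$ with ``one on each block'' cannot have prime-power order unless all but one $\chi_i$ vanish; to combine block contributions you must use characters of composite order and rely on the additivity of $\sigma$ and $\eta$ under connected sum. Second, and more seriously, the Gilmer inequality $|\sigma(K_n,\chi_x)| \leq 4g+1+\eta$ involves the \emph{global} genus $g$, so stacking $n$ blocks each contributing a bounded signature only forces $g$ to grow provided the signs of the block contributions can be aligned simultaneously, for \emph{every} admissible metabolizer. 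The paper's computation yields $|\sigma|>5+\eta$ for suitable characters but does not control the sign of $\sigma$, and you offer no mechanism to enforce uniform sign across blocks over all metabolizers. The authors themselves remark that already pushing to genus three ``increases significantly'' the computational complexity; your sketch does not overcome this, and the conjecture remains open.
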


The purpose of this paper is to provide some examples supporting Conjecture~\ref{conj:main-conjecture}.

Before stating our main theorem, let us introduce some notation.
For the rest of the paper, for positive coprime integers $p,q$, we let \(T(p,q)\) denote the \((p,q)\)-torus knot. 
If also $r,s$ are positive coprime integers, we write \(T(p,q;r,s)\) for the \((r,s)\)-cable of \(T(p,q)\). 
Note that every positive torus knot is an algebraic knot.
Recall that if $s>pqr$, the knot $T(p,q;r,s)$ is algebraic as well; see~\cite{EN}.
For a knot \(K\), \(g_{4}(K)\) denotes the smooth four-genus of \(K\) and \(g_{4}^{top}(K)\) denotes the topological four-genus of \(K\).
Finally, let \(-K\) denote the reverse mirror image of \(K\).

\begin{theorem}\label{thm:main-theorem}
  Consider the following GA-knot
  \begin{align*}
    K &= T(2, 17; 2, 83) \ \# \ -T(2, 11; 2, 83) \ \# \ T(2, 83) \ \# \ -T(2, 13; 2, 83) \\
      &\# \ T(2, 11; 2, 103) \ \# \ -T(2, 103) \  \# \ T(2, 13; 2, 103) \ \# \ -T(2, 17; 2, 103).
  \end{align*}
  \(K\) is algebraically slice and \(g_{4}(K) = g_{4}^{top}(K) = 2\).
\end{theorem}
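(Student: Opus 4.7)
The plan is to verify that $K$ is algebraically slice, then to establish the upper bound $g_4(K) \le 2$ via an explicit cobordism, and finally to prove the matching topological lower bound $g_4^{top}(K) \ge 2$ using Casson--Gordon invariants, which is the main technical point.

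For algebraic sliceness I would first apply Litherland's cabling formula
\[
  \sigma_\omega(T(2,q;2,s)) = \sigma_{\omega^2}(T(2,q)) + \sigma_\omega(T(2,s))
\]
to each of the eight summands of $K$, and then do a straightforward bookkeeping: for each $q\in\{11,13,17\}$ the two summands that contain $\sigma_{\omega^2}(T(2,q))$ appear with opposite signs and cancel, and for each $s\in\{83,103\}$ the four summands that contain $\sigma_\omega(T(2,s))$ also cancel in the signed sum, so $\sigma_\omega(K)\equiv 0$. The same cabling philosophy gives
\[
  \Delta_K(t) = \Delta_{T(2,17)}(t^2)^2\,\Delta_{T(2,11)}(t^2)^2\,\Delta_{T(2,13)}(t^2)^2\,\Delta_{T(2,83)}(t)^4\,\Delta_{T(2,103)}(t)^4,
\]
which is a perfect square. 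Pairing up matching positive and negative occurrences of each irreducible Alexander factor produces an explicit metabolizer of the Blanchfield pairing, which together with the vanishing of the signature function gives algebraic sliceness.

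The upper bound $g_4(K)\le 2$ I would obtain by producing an explicit genus-$2$ smooth cobordism in $B^4$ from $K$ to the unknot, for example by two band moves that exploit the natural pairings between summands sharing an outer companion $T(2,q)$ with opposite signs, such as $T(2,17;2,83)$ and $-T(2,17;2,103)$.

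The heart of the argument, and the step I expect to require the most care, is the lower bound $g_4^{top}(K)\ge 2$. Suppose for contradiction that $K$ bounds a locally flat genus-$1$ surface in $B^4$. The Casson--Gordon obstruction then produces a subgroup $L\subset H_1(\Sigma_2(K))$ adapted to the genus-$1$ cobordism such that the Casson--Gordon signature $\tau(K,\chi)$ is constrained to lie in a small explicit range for every prime-power-order character $\chi$ of $H_1(\Sigma_2(K))$ vanishing on $L$. The group $H_1(\Sigma_2(K))$ is finite with structure determined by the Alexander factors above, so there are only finitely many candidate subgroups $L$ to check. For each such $L$ I would exhibit a character $\chi$ vanishing on $L$ whose Casson--Gordon signature $\tau(K,\chi)$ violates the genus-$1$ bound, yielding the required contradiction. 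The calculation of $\tau(K,\chi)$ is feasible because of the cabling formula for Casson--Gordon invariants advertised in the abstract, which reduces $\tau$ on each of the eight cable summands to a combination of $\tau$-invariants of the torus knots $T(2,q)$ and $T(2,s)$. The specific choice of primes $11,13,17,83,103$ is precisely what makes the required nonvanishing hold, and completing this case-by-case check over all possible metabolizers is where I expect most of the work to go.
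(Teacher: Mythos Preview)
Your proposal is correct and follows essentially the same route as the paper: algebraic sliceness via the cabling formula for signatures and Alexander polynomials (the paper simply cites \cite{conway2019nonslice} for this), an explicit genus-two cobordism for the upper bound, and Gilmer's Casson--Gordon genus bound combined with a finite computer-aided case check (using the cabling formula for the $\sigma$-invariant) for the lower bound.

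One point where your formulation is slightly looser than what is actually needed: Gilmer's theorem does not merely produce a subgroup $L\subset H_1(\Sigma_2(K))$ on which the characters must vanish. It produces a splitting $\lambda_K=\beta_1\oplus\beta_2$ with $\beta_1$ presented by a rank-$2g$ matrix and $L$ a metabolizer of $\beta_2$ only; the bound $|\sigma(K,\chi_x)+\sigma_K(-1)|\le 4g+1+\eta(K,\chi_x)$ is then required for $x\in L$. For $g=1$ the piece $\beta_1$ is nontrivial, so $L$ is \emph{not} a metabolizer of the full linking form and ``characters vanishing on $L$'' is not quite the right set. The paper handles this by observing that $H_1(\Sigma_2(K))_p\cong\Z_p^4$ for $p\in\{83,103\}$, so after peeling off the rank-$2$ summand $\beta_1$ each primary part $(\beta_2)_p$ still has $\Z_p$-rank at least $2$, forcing $L_p\neq 0$; the computer check then runs over \emph{all} nonzero isotropic elements of $H_1(\Sigma_2(K))_p$ (which contains every possible $L_p$) and finds, for each, a multiple violating the bound. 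Your ``check all candidate $L$'' is the same idea, but you should be aware that the enumeration must also account for the unknown $\beta_1$ summand.
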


There are many more examples of algebraically slice GA-knots with \(g_{4} = g_{4}^{top} = 2\).
Among them, the example from Theorem~\ref{thm:main-theorem} is the smallest one we could find.
Few other examples are listed below.
\begin{align*}
K_1 &= T(2, 23; 2, 107) \ \# \  -T(2, 17; 2, 107) \ \# \  T(2, 107) \ \# \  -T(2, 19; 2, 107) \\ &\# \  T(2, 17; 2, 131) \ \# \  -T(2, 131) \ \# \  T(2, 19; 2, 131) \ \# \  -T(2, 23; 2, 131)
\\
K_2 &= T(2, 29; 2, 139) \ \# \  -T(2, 19; 2, 139) \ \# \  T(2, 139) \ \# \  -T(2, 23; 2, 139) \\ &\# \  T(2, 19; 2, 163) \ \# \  -T(2, 163) \ \# \  T(2, 23; 2, 163) \ \# \  -T(2, 29; 2, 163)
\\
K_3 &= T(2, 37; 2, 163) \ \# \  -T(2, 29; 2, 163) \ \# \  T(2, 163) \ \# \  -T(2, 31; 2, 163) \\ &\# \  T(2, 29; 2, 181) \ \# \  -T(2, 181) \ \# \  T(2, 31; 2, 181) \ \# \  -T(2, 37; 2, 181)
\end{align*}
To keep the paper concise we focus on a single example given in Theorem~\ref{thm:main-theorem}.

Our main tool for establishing the necessary genus bounds is the Casson-Gordon \(\sigma\)-invariant~\cite{CassonGordon1,CassonGordon2}.
In particular, we use the results of Gilmer~\cite{gilmerSliceGenusKnots1982a}, see also~\cite{FlorensGilmer}.
We note that the following tools are insufficient to give any nontrivial genus bounds.
\begin{itemize}
\item By the definition, Levine-Tristram signatures fail to provide any genus bounds for an algebraically slice knot.
\item For an algebraically-slice GA-knot~\(K\) we have $\tau(K)=s(K)=0$.
  Similarly, all the higher $s$-invariants coming from $sl(N)$ homologies~\cite{Lewar-Lobb,Lewark}, for \(N \geq 2\), are zero.
  This is proved in~\cite[Proposition 8.2]{HeddenKirkLivingston}.
\item The $\Upsilon$ invariant of Ozsv\'ath, Stipsicz, and Szab\'o~\cite{OSS} vanishes for $K$. The argument is as in \cite[Section 1.2]{conway2019nonslice} and involves the result of Tange~\cite{Tange}.
\end{itemize}

The proof of Theorem~\ref{thm:main-theorem} consists of two steps: establishing an upper and a lower bound.
To obtain the upper bound, we construct an explicit genus-two surface in~\(B^{4}\) which bounds the knot from Theorem~\ref{thm:main-theorem}.
This is done in Lemma~\ref{lemma:upper-bound}.
On the other hand, the lower bound is established with the aid of Gilmer's bound~\cite[Theorem 1]{gilmerSliceGenusKnots1982a} and hence requires much more work.
In order to apply Gilmer's lower bound, we had to resort to computer calculations to verify the required assumptions.
The calculations were done using a SageMath~\cite{sagemath} script, available at~\cite{sagecode}.
The output of the Sage script is attached to the arXiv version of the paper.

The complexity of calculations required to verify assumptions of the Gilmer's lower bound increases significantly if one takes more complicated examples of GA-knots.
However, it is possible to make some optimizations by considering only the primary summands of the linking form, see Remark~\ref{remark:optimizations} for a more detailed discussion.
In particular, for the knot from Theorem~\ref{thm:main-theorem}, we were able to reduce the number of cases to check from roughly~\(6 \cdot 10^{12}\) to~\(1.6 \cdot 10^{6}\).
We made attempts to find examples of algebraically slice GA-knots with topological four-genus equal to three, however, the calculations were too complicated even including the aforementioned optimizations.

The paper is organized as follows.
Section~\ref{sec:casson-gordon-invariants} serves as a quick recollection of the most relevant properties of Casson-Gordon \(\sigma\)-invariants and the four-genus bound of Gilmer.
In Section~\ref{sec:computations} we discuss our implementation of calculations of Casson-Gordon invariants.
In Section~\ref{sec:genus-bounds}, we give the proof of Theorem~\ref{thm:main-theorem}.

\begin{ack}
  The authors would like to thank Maciej Borodzik for helpful discussions and for careful reading of the
  preliminary version of the paper.
  The second author is grateful to Anthony Conway for many interesting conversations that inspired the paper.
  The authors are also grateful to Lukas Lewark for pointing out some errors in the first version of the paper.
  We are also grateful to the anonymous referee whose insightful comments helped us to greatly improve the paper.

  
  The first author was supported by the National Science Center grant 
  2016/22/E/ST1/00040.
\end{ack}

\section{Casson-Gordon invariants}
\label{sec:casson-gordon-invariants}

\subsection{Linking forms and characters}
\label{sec:link-forms-char}
In this section we review some basic properties of linking forms.
For a more detailed study of the algebra of linking forms refer to~\cite{KawauchiKojima,WallQuadratic}.

For an integer \(m>1\), let \(C_{m} \subset S^{1}\) denote the multiplicative subgroup generated by primitive roots of unity of order \(m\).
Similarly, by \(C_{\infty}\) we denote the subgroup of \(S^{1}\) consisting of roots of unity, i.e., \(C_{\infty}\) is generated by the set \(\bigcup_{m \geq 1} C_{m}\).

If \(A\) is a finite abelian group, a \emph{character} on~\(A\) is a homomorphism
\[\chi \colon A \to C_{\infty}.\]
For a fixed \(m \geq 1\), we say that the character~\(\chi\) is an \emph{order~\(m\) character}, if \(\chi(A) = C_{m}\).
Observe that the set~\(\Char(A)\) of characters on a finite abelian group~\(A\) can be identified with the set \(\hom(A,\Q / \Z)\).
Indeed, any \(\phi \in \hom(A, \Q / \Z)\) corresponds to the character given by the formula
\begin{equation}
  \label{eq:correspondence-characters-homomorphism}
  \chi(\phi)(y) = \exp \left(  2 \pi i \phi(y) \right).
\end{equation}

A \emph{linking form} is a pair \((A,\lambda)\) consisting of a finite abelian group \(A\) and a nonsingular symmetric
bilinear pairing
\[\lambda \colon A \times A \to \Q / \Z.\]
Here, nonsingularity means that the adjoint map
\[\lambda^{D} \colon A \to \hom(A,\Q / \Z), \quad \lambda^{D} \colon x \mapsto \left( y \mapsto \lambda(x,y) \right)\]
is an isomorphism of abelian groups.

\begin{example}\label{example:linking-forms-cyclic-groups}
  Fix a prime \(q \in \Z\).  For a non-zero integer \(a\) we will denote by \((a/q)\) the linking form
  \[(a/q) \colon \Z_{q} \times \Z_{q} \to \Q / \Z, \quad (x,y) \mapsto \frac{a}{q} \cdot x \cdot y.\]
  Observe that \((a/q)\) is nonsingular if \(\gcd(a,q) = 1\).
\end{example}

\begin{example}
  Let \(K\) be a knot in \(S^{3}\) and let \(\Sigma(K)\) denote the double branched cover of \(K\).
  By~\cite[Proposition 8.20]{Burde-Zieschang}, \(\Sigma(K)\) is a rational homology sphere, hence there is a linking form
  \[\lambda_{K} \colon H_{1}(\Sigma(K)) \times H_{1}(\Sigma(K)) \to \Q / \Z.\]
  According to~\cite[Section 2.2]{Conway-Friedl-Gerrit}, the adjoint of \(\lambda_{K}\) is equal to the composition
  \[\lambda_{K}^{D} \colon H_{1}(\Sigma(K)) \xrightarrow{PD} H^{2}(\Sigma(K)) \xrightarrow{B} H^{1}(\Sigma(K);\Q / \Z) \xrightarrow{\operatorname{ev}} \hom(H_{1}(\Sigma(K)),\Q / \Z).\]
  In the above diagram, \(PD\) denotes the inverse of the Poincar\'e duality isomorphism given by the cap product with the fundamental class of \(\Sigma(K)\), \(B\) denotes the inverse of the Bockstein map associated to the short exact sequence \(0 \to \Z \to \Q \to \Q / \Z \to 0\), and
  \(\operatorname{ev}\) is the evaluation map coming from the Universal Coefficient Theorem.
  Since \(\Sigma(K)\) is a rational homology sphere, the maps~\(B\) and~\(\operatorname{ev}\) are isomorphisms.
  Consequently, \(\lambda_{K}\) is indeed nonsingular.
\end{example}

For two linking forms \((A_{1},\lambda_{1})\) and \((A_{2},\lambda_{2})\), we define their (orthogonal) sum \((A_{1}\oplus A_{2},\lambda_{1} \oplus \lambda_{2})\) by the formula
\[(\lambda_{1} \oplus \lambda_{2})((x_{1},x_{2}),(y_{1},y_{2})) = \lambda_{1}(x_{1},y_{1}) + \lambda_{2}(x_{2},y_{2}).\]
For \(n>0\), we will denote by \(n \lambda_{1}\) the \(n\)-fold direct sum of the linking form \(\lambda_{1}\).
For a subgroup \(M \subset A\) define its \emph{orthogonal complement}
\[M^{\perp} = \left\{x \in A \colon \forall_{y \in M} \quad \lambda(x,y) = 0\right\}.\]
We say that \(M \subset A\) is~\emph{isotropic} if \(M \subset M^{\perp}\) and a is~\emph{metabolizer} if \(M^{\perp} = M\).
Similarly, we say that \(x \in M\) is~\emph{isotropic}, if~\(\lambda(x,x)=0\).
The linking form \((A,\lambda)\) is~\emph{metabolic} if it admits a metabolizer.

For a symmetric integral \(n \times n\) matrix \(H\) with \(\det(H) \neq 0\), consider the symmetric bilinear form
\[\lambda_{H} \colon \left(\Z^{n} / H \Z^{n}\right) \times \left(\Z^{n} / H \Z^{n}\right) \to \Q / \Z, \quad \lambda_{H}(x + H \Z^{n},y + H \Z^{n}) = x^{T} \cdot H^{-1} \cdot y.\]
We say that the linking form \((A,\lambda)\) is \emph{represented} by the symmetric integral matrix \(H\), if \((A,\lambda)\) is isomorphic to \((\Z^{n} / H \Z^{n}, \lambda_{H})\).

If \((A,\lambda)\) is a linking form, the underlying abelian group \(A\) admits a primary decomposition
\[A = \bigoplus_{p} A_{p},\]
where the above direct sum is taken over all primes and \(A_{p}\) denotes the \(p\)-primary part of \(A\).
According to~\cite[Section 5]{WallQuadratic}, the primary decomposition of \(A\) gives a primary decomposition of \(\lambda\)
\begin{equation}
  \label{eq:primary-decomposition-linking-form}
  (A, \lambda) = \bigoplus_{p} (A_{p},\lambda_{p}),
\end{equation}
where \(\lambda_{p}\) denotes the restriction of \(\lambda\) to \(A_{p}\).

\begin{lemma}\label{lemma:primary-decomposition-metabolizers}
  Suppose that \((A,\lambda)\) is a linking form admitting a metabolizer \(M\), then \(M\) admits a primary decomposition
  \[M = \bigoplus_{p} M_{p},\]
  where \(M_{p} = M \cap A_{p}\) is a metabolizer for the linking form \((A_{p},\lambda_{p})\), for any prime \(p\).
\end{lemma}
\begin{proof}
  Choose a metabolizer \(M \subset A\) for \(\lambda\) and let \(M_{p} = M \cap A_{p}\), for any prime \(p\).
  Observe that \(M_{p}^{\perp_{\lambda_{p}}} = M^{\perp_{\lambda}} \cap A_{p}\), where \(M^{\perp_{\lambda}}\) and \(M^{\perp_{\lambda_{p}}}\) denote orthogonal complements with respect to \(\lambda\) and
  \(\lambda_{p}\), respectively.
  Indeed, the inclusion \(M^{\perp_{\lambda}} \cap A_{p} \subset M_{p}^{\perp_{\lambda_{p}}}\) is obvious.
  On the other hand, if \(x \in A_{p}\), then by the primary decomposition~\eqref{eq:primary-decomposition-linking-form}, we have \(\lambda(x,y) = 0\), for any \(y \in A_{q}\), for any prime \(q \neq p\).
  Hence \(M_{p}^{\perp_{\lambda_{p}}} \subset M^{\perp_{\lambda}} \cap A_{p}\).
  Therefore,
  \[M_{p}^{\perp_{\lambda_{p}}} = M^{\perp_{\lambda}} \cap A_{p} = M \cap A_{p} = M_{p},\]
  which proves that \(M_{p}\) is a metabolizer for \(\lambda_{p}\), as desired.
\end{proof}

Let us finish this section with the following lemma relating linking forms and characters.

\begin{lemma}\label{lemma:character-vector-correspondence}
  Let \((A,\lambda)\) be a linking form, then \(\lambda\) determines a natural identification of abelian groups
  \[T_{\lambda} \colon A \cong \Char(A),\]
  where, for \(x \in A\), \(T_{\lambda}(x) = \chi_{x}\), where \(\chi_{x}(y) = \exp(2 \pi i \lambda(x,y)))\).
\end{lemma}
\begin{proof}
  As we observed above, there is an identification \(\hom(A,\Q / \Z) \cong \Char(A)\) given by formula~\eqref{eq:correspondence-characters-homomorphism}.
  Furthermore, by nonsingularity of \(\lambda\), we obtain a natural identification \(\lambda^{D} \colon A \cong \hom(A,\Q / \Z)\), hence the lemma follows.
  We can define \(T_{\lambda}\) to be the composition of these isomorphisms.
\end{proof}





\subsection{Background on Casson-Gordon invariants}
\label{sec:background-casson-gordon-inv}

Fix a positive integer \(m>0\).
Recall from the previous section that \(C_{m}\) denotes the subgroup of \(S^{1}\) generated by primitive roots of unity of order \(m\).
A topological \emph{\(n\)-manifold over \(C_{m}\)} (or a topological \(n\)-dimensional \(C_{m}\)-manifold) is a pair \((M,\chi)\), where \(M\) is a compact, oriented topological \(n\)-manifold and \(\chi \colon H_{1}(M) \to C_{m}\) is a homomorphism.

We say that a \(C_{m}\)-manifold \((M,\chi)\) of dimension~\(n\) \emph{bounds over \(C_{m}\)} if there exists a topological \((n+1)\)-manifold \((W,\psi)\) over \(C_{m}\) such that
\[\partial (W, \psi) = (M,\chi).\]
In particular, we require that \(M\) is the oriented boundary of \(W\) and \(\chi = \psi \circ \iota\), where \(\iota \colon M \hookrightarrow W\) is the inclusion map.
Furthermore, we say that two closed topological \(n\)-manifolds \((M_{1},\chi_{1})\), \((M_{2},\chi)\) over \(C_{m}\) are \emph{bordant over \(C_{m}\)} if the \(C_{m}\)-manifold \(\left(M_{1} \sqcup -M_{2}, \chi_{1} \oplus
    \chi_{2}\right)\) bounds over \(C_{m}\).
For a positive integer \(r\), we will use the abbreviated notation
\[r \cdot (M,\chi) = \left( \bigsqcup_{r} M, \bigoplus_{r} \chi \right).\]

The following lemma is crucial for defining Casson-Gordon invariants of knots and links in the topological category.
The lemma is known to the experts in the field, however, in the literature, the lemma is stated for smooth manifolds only, see~\cite[Proposition 2.12]{conway-CG-notes} or~\cite[p. 183 below the example]{CassonGordon2}.
Hence, to fill this gap, we provide a sketch of the proof which works in the topological category.

\begin{lemma}\label{lemma:topological-bordism}
  Fix an integer \(m>0\).
  Let \((M,\chi)\) be a closed \(3\)-dimensional \(C_{m}\)-manifold, then there exists an integer \(r>0\) and a \(4\)-dimensional \(C_{m}\)-manifold \((W,\psi)\) such that
  \[r \cdot (M,\chi) = \partial (W,\psi),\]
  i.e., \(r \cdot (M,\chi)\) bounds over \(C_{m}\).
\end{lemma}
\begin{proof}[Sketch of the proof]
  We only sketch the argument.
  More details can be found in~\cite[Chapters 10-11 of Annex C]{kirby-siebenmann},~\cite{kupers}, see also~\cite{stong} or~\cite{davis-kirk}.

  Recall that \(\Omega_{3}^{STOP}(C_{m})\) denotes the bordism group of closed oriented topological \(3\)-dimensional \(C_{m}\)-manifolds.
  In other words, \(\Omega_{3}^{STOP}(C_{m})\) is the set of equivalence classes of homeomorphisms classes of closed
  oriented \(3\)-dimensional \(C_{m}\)-manifolds under the relation of bordism over \(C_{m}\).
  Disjoint sum equips \(\Omega_{3}^{STOP}(C_{m})\) with the structure of an abelian group (the empty manifold is the neutral element).
  In particular, a \(C_{m}\)-manifold \(r \cdot (M,\chi)\), where \(r>0\), represents the trivial element in \(\Omega_{3}^{STOP}(C_{m})\) if it bounds over \(C_{m}\).
  Consequently, it is sufficient to prove that~\(\Omega_{3}^{STOP}(C_{m})\) is a finite abelian group.
  
  By topological transversality, see~\cite{kirby-siebenmann},~\cite[Theorem 9.5A]{Freedman-Quinn} or~\cite[Chapter 10]{friedl2020survey}, the Pontriagin-Thom construction works in the topological category,
  hence there is an isomorphism of abelian groups, see~\cite[Lecture 3]{kupers},
  \[\Omega_{n}^{\operatorname{STOP}}(C_{m}) \cong \pi_{n}^{S}(BC_{m} \wedge \operatorname{MSTOP}),\]
  where \(BC_{m}\) is the classifying space of the group \(C_{m}\) and \(\operatorname{MSTOP}\) denotes the relevant Thom spectrum, see~\cite[Chapter 8.7]{davis-kirk}.
  Stable homotopy groups \(\pi_{n}^{S}(BC_{m} \wedge \operatorname{MSTOP})\) can be calculated with the aid of the Atiyah-Hirzebruch spectral sequence, refer to~\cite[Section 9.2]{davis-kirk}.
  Since
  \[\pi_{n}^{S}(\operatorname{MSTOP}) = \Omega_{n}^{\operatorname{STOP}}(pt) =
    \begin{cases}
      \Z, & n = 0, \\
      0, & n=1,2,3,
    \end{cases}
  \]
  see~\cite[Annex C, Theorem 11.1]{kirby-siebenmann}, the Atiyah-Hirzebruch spectral sequence yields an isomorphism of abelian groups
  \[\Omega_{3}^{\operatorname{STOP}}(C_{m}) \cong \pi_{3}^{S}(BC_{m} \wedge \operatorname{MSTOP}) \cong H_{3}(BC_{m}) \cong C_{m},\]
  compare with computations of oriented bordims groups in~\cite[Section 9.3]{davis-kirk} just after Theorem~9.10.
  Above, the first isomorphism follows from the topological version of the Pontriagin-Thom construction, the second isomorphism follows from the Atiyah-Hirzebruch spectral sequence, and the last isomorphism
  follows from formula~(3.1) in~\cite[Chapter II.3]{brown}.
\end{proof}

Let \((W,\psi)\) be a \(4\)-dimensional topological \(C_{m}\)-manifold with boundary.
The cyclic group \(C_{m}\) acts by multiplication on the complex plane, hence we can consider twisted homology groups \(H_{\ast}(W,\C^{\psi})\) and \(H_{\ast}(\partial W,\C^{\chi})\), where \(\chi\)
denotes the restriction of \(\psi\) to \(\partial W\).
Recall that the \(\psi\)-twisted Poincar\'e duality can be used to construct the~\(\psi\)-twisted intersection form of \(W\)
\[Q_{\psi} \colon H_{2}(W,\C^{\psi}) \times H_{2}(W,\C^{\psi}) \to \C,\]
see~\cite[Appendix D.6]{viro}.
The \(\psi\)-twisted intersection form is hermitian with respect to the complex conjugation.
We will denote by \(\sign^{\psi}(W)\) the signature of \(Q_{\psi}\).
Similarly, we will denote by \(\sign(W)\) the untwisted signature of \(W\), i.e., \(\sign (W) := \sign^{\psi_{0}}(W)\), where \(\psi_{0}\) is the trivial homomorphism.



\begin{definition}[\cite{CassonGordon2,CassonGordon1}]\label{definition:CS-invariants}
  Fix a knot \(K\) and a character \(\chi \colon H_{1}(\Sigma(K)) \to C_{m}\).
  The \emph{Casson-Gordon \(\sigma\)-invariant} and the \emph{Casson-Gordon nullity} of \(K\) are defined by the formulas
  \begin{align*}
    \sigma(K,\chi) &= \frac{1}{r} \left( \sign^{\psi}(W) - \sign(W) \right), \\
    \eta(K,\chi) &= \dim_{\C} H_{1}(\Sigma(K),\C^{\chi}),
  \end{align*}
  where \((W,\psi)\) is a \(4\)-dimensional \(C_{m}\)-manifold bounding \(r \cdot (\Sigma(K),\chi)\), for some positive integer \(r\).
\end{definition}

\begin{remark}
  Observe that Lemma~\ref{lemma:topological-bordism} guarantees the existence of the required four-dimensional \(C_{m}\)-manifold in Definition~\ref{definition:CS-invariants}.
\end{remark}

Basic properties of the \(\sigma\)-invariant and nullity are summarized in the following proposition.

\begin{proposition}\label{prop:properties-cs-invariants}
  Let \(K\) be a knot in \(S^{3}\).
  \begin{enumerate}
  \item If \(\chi\) is trivial, then \label{item:properties-cs-invariants-1}
    \[\sigma(K,\chi) = \eta(K,\chi) = 0.\]
  \item If \(K_{1}\) and \(K_{2}\) are knots with characters \(\chi_{1}\) and \(\chi_{2}\), then
    \begin{align*}
      \sigma(K_{1} \# K_{2},\chi_{1} \oplus \chi_{2}) &= \sigma(K_{1},\chi_{1}) + \sigma(K_{2},\chi_{2}), \\
      \eta(K_{1} \# K_{2},\chi_{1} \oplus \chi_{2}) &=
                            \begin{cases}
                              \eta(K_{1},\chi_{1}) + \eta(K_{2},\chi_{2}) + 1, & \chi_{1},\chi_{2} \text{ nontrivial}, \\
                              \eta(K_{1},\chi_{1}) + \eta(K_{2},\chi_{2}), & \text{otherwise}.
                            \end{cases}
    \end{align*}
    \label{item:properties-cs-invariants-2}
  \end{enumerate}
\end{proposition}
\begin{proof}
  Let us first prove Item~\ref{item:properties-cs-invariants-1}.
  Suppose that \(\chi\) is the trivial character on \(H_{1}(\Sigma(K))\).
  Let \(W\) be a compact, connected, oriented topological \(4\)-manifold bounding \(\Sigma(K)\).
  Since the \(3\)-dimensional topological oriented bordism group \(\Omega_{3}^{STOP}(pt)\) is trivial, such \(W\) always exists.
  For the proof of triviality of~\(\Omega_{3}^{STOP}(pt)\) refer to~\cite[Annex C, Theorem 11.1]{kirby-siebenmann}.

  Let \(\psi_{0} \colon H_{1}(W) \to C_{m}\) be the trivial character.
  Since \(\chi\) is trivial, \(\partial(W,\psi_{0}) = (\Sigma(K),\chi)\) in \(\Omega_{3}^{STOP}(C_{m})\).
  By definition,
  \[\sigma(K,\chi) = \sign^{\psi_{0}}(W) - \sign(W) = \sign(W) - \sign(W) = 0,\]
  since the triviality of \(\psi_{0}\) implies that \(\sign^{\psi_{0}}(W) = \sign(W)\).

  Similarly,
  \[\eta(K,\chi) = \dim_{\C}H_{1}(\Sigma(K),\C^{\chi}) = \dim_{\C}(\Sigma(K);\C) = 0.\]
  The middle equality above follows from the fact that \(\chi\) is trivial.
  The last equality follows from the fact that if \(K\) is a knot, then \(\Sigma(K)\) is a rational homology sphere, see~\cite[Proposition 8.20]{Burde-Zieschang}.

  For the proof of~Item~\ref{item:properties-cs-invariants-2} refer to~\cite[Proposition 2.5]{FlorensGilmer}. 
\end{proof}


The next theorem describes a method for obtaining genus bounds with the aid of the Casson-Gordon invariants.

\begin{theorem}[\cite{FlorensGilmer,gilmerSliceGenusKnots1982a}]\label{thm:genus-bounds}
  Let \(K \subset S^{3}\) be a knot and denote by \(\sigma_{K}\) the signature of \(K\).
  Suppose that \(K\) bounds a locally flat embedded surface of genus \(g\) in the four-ball.
  Then, there exists a decomposition of the linking form \(\lambda_{K} = \beta_{1} \oplus \beta_{2}\) such that
  \begin{enumerate}
  \item \(\beta_{1}\) admits a presentation by a matrix of rank \(2g\) and signature \(\sigma_{K}\),
  \item \(\beta_{2}\) is metabolic and there exists a metabolizer \(L\) of \(\beta_{2}\) such that for any \(x \in L \setminus \{0\}\) we have
    \[\left| \sigma(K,\chi_{x}) + \sigma_{K} \right| \leq \eta(K,\chi_{x}) + 4g+1.\]
  \end{enumerate}
\end{theorem}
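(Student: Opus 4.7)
The plan is to follow the classical Casson--Gordon/Gilmer strategy: from the genus-$g$ locally flat surface $F \subset B^{4}$ with $\partial F = K$, build the $2$-fold cyclic cover $W$ of $B^{4}$ branched along $F$ (so $\partial W = \Sigma(K)$), and extract from its algebraic topology both the decomposition $\lambda_{K} = \beta_{1} \oplus \beta_{2}$ and the twisted-signature bound on the metabolizer side. Standard branched-cover computations, valid in the locally flat category by Freedman--Quinn, give $\mathrm{rank}_{\Z} H_{2}(W;\Z) \le 2g$, together with the identification that the intersection form on $H_{2}(W;\Z)/\mathrm{torsion}$ is represented by a symmetric integral matrix of rank $2g$ whose signature is $\sigma_{K}(-1)$; when $F$ is a pushed-in Seifert surface this is literally $V + V^{T}$ for the Seifert matrix $V$, and the general locally flat case reduces to this via Tristram's signature inequality.

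Next, set $L := \ker\bigl( j_{\ast}\colon H_{1}(\Sigma(K)) \to H_{1}(W) \bigr)$ and combine the long exact sequence of the pair $(W,\partial W)$ with Poincar\'e--Lefschetz duality in $\Z$ and $\Q/\Z$ coefficients. This yields the standard duality relation $L \subset L^{\perp}$ with respect to $\lambda_{K}$ and identifies $L^{\perp}/L$ with the cokernel of the intersection matrix of $W$. An orthogonal splitting $H_{1}(\Sigma(K)) = (L^{\perp}/L) \oplus M$ then produces $\lambda_{K} = \beta_{1} \oplus \beta_{2}$: on the first summand $\beta_{1}$ is presented by the matrix from the previous paragraph, giving condition (1); on the second summand $\beta_{2}$ inherits $L$ as a metabolizer, giving the first half of condition (2).

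For the inequality in (2), fix $x \in L \setminus \{0\}$. By Lemma~\ref{lemma:char-vanishing-metabolizer}, the character $\chi_{x}$ vanishes on $L$, so after multiplying the bordism class by a suitable positive integer $r$, it extends to a map $\psi\colon \pi_{1}(W') \to \Z_{m}$ on a $4$-manifold $W'$ built from $r$ copies of $W$ with $\partial W' = r\cdot\Sigma(K)$. The definition gives
\[ \sigma(K,\chi_{x}) = \frac{1}{r}\bigl(\sign^{\psi}(W') - \sign(W')\bigr), \]
and running the twisted long exact sequence of $(W',\partial W')$ with $\C^{\psi}$ coefficients bounds $|\sign^{\psi}(W') - \sign(W')|$ in terms of $\dim_{\C} H_{2}(W';\C^{\psi}) + \dim_{\C} H_{2}(W';\C)$. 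The uncontrolled contributions are at most $2rg$ from $H_{2}(W';\C)$, $r\cdot\eta(K,\chi_{x})$ from $H_{1}(\Sigma(K);\C^{\chi_{x}})$, and a universal additive $r$ from the $H_{0}$ correction. Dividing by $r$ and using $\sign(W') = r\cdot\sigma_{K}(-1)$ gives $\bigl|\sigma(K,\chi_{x}) + \sigma_{K}(-1)\bigr| \le \eta(K,\chi_{x}) + 4g + 1$.

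The main obstacle is the orthogonal splitting in the second paragraph when $H_{1}(W;\Z)$ has torsion: one must guarantee that the rank-$2g$ summand $\beta_{1}$ realizes the signature $\sigma_{K}(-1)$ on the nose (not just modulo corrections) and that $L$ is a literal metabolizer of $\beta_{2}$. This delicate algebraic step, combined with the careful bookkeeping of the additive $+1$ in the final bound, is where Gilmer's refinement over the genus-zero Casson--Gordon slice obstruction genuinely requires work.
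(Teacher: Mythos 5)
First, a point of comparison: the paper does not prove this statement at all. It is imported verbatim from Gilmer and Florens--Gilmer, with only a remark that the results of Freedman--Quinn let one transplant the original smooth proof into the locally flat category. So your sketch is not competing with an argument in the paper; it is an outline of the proof in the cited sources, and it does correctly identify their skeleton (double branched cover \(W \to B^{4}\) along \(F\), the isotropic subgroup \(L = \ker(H_{1}(\Sigma(K)) \to H_{1}(W))\), extension of \(\chi_{x}\) over a multiple of \(W\) via the finiteness of \(\Omega_{3}(\Z_{m})\), and a twisted long-exact-sequence estimate).

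As a proof, however, the sketch has genuine gaps. (a) The computation of the intersection form of \(W\) is asserted rather than proved: the Euler characteristic only gives \(b_{2}(W) = 2g + b_{1}(W) + b_{3}(W)\), so you must separately show \(H_{1}(W;\Q)=H_{3}(W;\Q)=0\), and the identity \(\sign(W) = \sigma_{K}(-1)\) for an \emph{arbitrary} locally flat genus-\(g\) surface is the Viro/Kauffman--Taylor \(G\)-signature theorem --- ``reduces to the pushed-in Seifert surface case via Tristram's signature inequality'' is not an argument, since an inequality cannot deliver the equality of signatures that condition (1) requires. (b) The splitting \(\lambda_{K} = \beta_{1} \oplus \beta_{2}\) is the heart of Gilmer's refinement, and you do not construct it: \(L^{\perp}/L\) is a subquotient, not a subgroup, so writing \(H_{1}(\Sigma(K)) = (L^{\perp}/L) \oplus M\) does not typecheck, and producing an actual orthogonal direct-sum decomposition of the linking form with \(L\) a literal metabolizer of \(\beta_{2}\) is exactly the step you defer to the final paragraph as ``the main obstacle.'' Flagging the hard step is honest, but it means the decomposition --- which is what the paper actually uses, via the factorization \(L = L_{83} \times L_{103}\) in the proof of Theorem~\ref{thm:main-theorem} --- is not established. (c) The arithmetic behind the constant \(4g+1\) is not carried out: the claimed intermediate bound of \(|\sign^{\psi}(W') - \sign(W')|\) by \(\dim_{\C} H_{2}(W';\C^{\psi}) + \dim_{\C} H_{2}(W';\C)\) is too crude as stated (it would give roughly \(4g\) plus twice the nullity plus further corrections unless one tracks which eigenspaces contribute), so the precise form \(\eta(K,\chi_{x}) + 4g + 1\) with coefficient \(1\) on the nullity is not actually derived. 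To repair the write-up you should either carry out these three steps following Gilmer's and Florens--Gilmer's arguments, or do what the paper does and cite them, adding the Freedman--Quinn remark for the topological category.
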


\begin{remark}
  As mentioned in the introduction, Theorem~\ref{thm:genus-bounds} was originally formulated in the smooth category.
  However, it has been understood for many years that the results of Gilmer~\cite{gilmerTopologicalProofSignature1981} and Freedman-Quinn~\cite{Freedman-Quinn} can be used to adapt the original proof of
  Theorem~\ref{thm:genus-bounds} to work in the 
  topological category.
\end{remark}

\subsection{Casson-Gordon invariants of \((2,q)\)-cable knots}
\label{sec:torus-knots}


For a torus knot \(T(2,q)\), where \(q>1\) is odd, the double branched cover \(\Sigma(T(2,q))\) is the lens space \(L(q,1)\).
Indeed, the torus knot \(T(2,q)\) is the two-bridge knot \(b(q,1)\), see~\cite[Chapter 12.A]{Burde-Zieschang}, hence by~\cite[Proposition 12.3]{Burde-Zieschang}, \(\Sigma(T(2,q)) = \Sigma(b(q,1)) = L(q,1)\).
In particular, \(H_{1}(\Sigma(T(2,q))) \cong \Z_{q}\) and, by~\cite[Proposition 1.2]{Conway-Friedl-Gerrit}, the linking form \(\lambda_{T(2,q)}\) is isomorphic to the linking form \((-1/q)\),
see Example~\ref{example:linking-forms-cyclic-groups} for the notation.

For a general knot \(K\), the double branched cover of its \((2,q)\)-cable \(K(2,q)\), can be built form \(\Sigma(T(2,q))\), see~\cite[Lemma~4]{Litherland}.
In particular, the following lemma, taken from~\cite[Lemma 2.2]{HeddenKirkLivingston}, describes the most relevant properties of double-branched covers of \((2,q)\)-cable knots.

\begin{lemma}\label{lemma:branched-covers-cable-knots}
  Let \(K\) be a knot in \(S^{3}\), \(K(2,q)\) its \((2, q)\)-cable, and \(T(2,q)\) the \((2, q)\)-torus knot.
  Choose an odd prime \(p\) and let \(\xi_{p} = e^{2 \pi i / p}\).
  Then:
  \begin{itemize}
      \item There is a map \(r \colon \Sigma(K(2,q)) \to \Sigma(T(2,q)) = L(q,1)\), which is a homology isomorphism.
    Consequently, we can identify \(H_{1}(\Sigma(K(2,q))) \cong \Z_{q}\).
      \item Furthermore, the map \(r\) induces an isometry of linking forms.
    In particular, there is a generator \(l \in H_{1}(\Sigma(K(2,q)))\) such that \(\lambda_{K(2,q)}(l,l)=-1/q\).
  \end{itemize}
\end{lemma}

The following lemma summarizes computations of Casson-Gordon invariants of \((2,q)\)-cable knots for~\(q\) an odd prime.

\begin{lemma}\label{lemma:sigma-torus-knots}
  Let \(q\) be an odd prime.
  \begin{enumerate}
  \item\label{item:sigma-inv} For any \(1 \leq a \leq q-1\) we have
    \[\sigma(T(2,q),\chi_{a}) = -q + \frac{2a(q-a)}{q}, \quad \eta(T(2,q),\chi_{a}) = 0.\]
    For \(a=0\) we have
    \[\sigma(T(2,q),\chi_{0}) = \eta(T(2,q),\chi_{0}) = 0.\]
  \item\label{item:cabling} For any knot \(K\), and any \(1 \leq a \leq q-1\), we have
    \[\sigma(K(2,q),\chi_{a}) = -q + \frac{2a(q-a)}{q} + 2 \sigma_{K}(\xi_{q}^{a})\]
    and
    \[\eta(K(2,q),\chi_{a}) = 2 \eta_{K}(\xi_{q}^{a}),\]
    where \(\sigma_{K}\) and \(\eta_{K}\) denote the Levine-Tristram signature of \(K\) and nullity of \(K\), respectively.
  \end{enumerate}
\end{lemma}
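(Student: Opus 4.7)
The plan is to handle parts~(1) and~(2) separately. Part~(1) reduces to a standard $G$-signature computation on a lens space, and part~(2) follows from a signature-additivity argument applied to the Litherland surgery decomposition of $\Sigma(K(2,q))$ already recalled in the paper.

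For part~(1), I would start from $\Sigma(T(2,q))=L(q,1)$, so that $H_{1}=\Z_{q}$ and $\chi_{a}$ sends a chosen generator to $\xi_{q}^{a}$. A natural bounding $4$-manifold is $W$, the $D^{2}$-bundle over $S^{2}$ of Euler number $q$, obtained by attaching a $q$-framed $2$-handle to $B^{4}$ along the unknot; its untwisted signature is $+1$. Since $\pi_{1}(W)=1$ the nontrivial character does not extend, so I would replace $W$ either by an appropriate cyclic branched cover over the cocore disk, or equivalently pass to a multiple $r\cdot(\Sigma(T(2,q)),\chi_{a})$ that bounds in $\Omega_{3}(\Z_{q})$. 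The $G$-signature theorem then expresses $\sign^{\psi}-\sign$ as a Dedekind-type rational, yielding $-q+\tfrac{2a(q-a)}{q}$. For the nullity, I would use the identification $\dim_{\C}H_{1}(\Sigma(K),\C^{\chi_{a}})=\ord_{\xi_{q}^{a}}\Delta_{K}$; since $\Delta_{T(2,q)}(t)=(t^{q}+1)/(t+1)$ satisfies $\Delta_{T(2,q)}(\xi_{q}^{a})=2/(\xi_{q}^{a}+1)\neq 0$ for $1\leq a\leq q-1$ with $q$ odd, we obtain $\eta(T(2,q),\chi_{a})=0$.

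For part~(2), I would build a bounding $4$-manifold for $(\Sigma(K(2,q)),\chi_{a})$ by starting from a bounding manifold for $(\Sigma(T(2,q)),\chi_{a})$ and, mirroring the surgery decomposition $\Sigma(K(2,q))=\bigl(L(q,1)\setminus\nu(\tilde K)\bigr)\cup 2\,E_{K}$ of Litherland, gluing in two copies of a standard bounding $4$-manifold $N_{K}$ with $\partial N_{K}=E_{K}$ carrying the infinite cyclic cover pulled back from the abelianization. The restriction of $\chi_{a}$ to each copy of $E_{K}$ factors through $H_{1}(E_{K})=\Z$ and sends the meridian to $\xi_{q}^{a}$, so the twisted signature contribution of each $N_{K}$ is exactly $\sigma_{K}(\xi_{q}^{a})$; signature additivity along the torus gluings then gives
\[
\sigma(K(2,q),\chi_{a})=\sigma(T(2,q),\chi_{a})+2\sigma_{K}(\xi_{q}^{a}),
\]
and substituting part~(1) produces the stated formula. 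The nullity identity follows from the matching Mayer--Vietoris computation: each of the two copies of $E_{K}$ contributes $\dim_{\C}H_{1}(E_{K},\C^{\xi_{q}^{a}})=\eta_{K}(\xi_{q}^{a})$ to $H_{1}(\Sigma(K(2,q)),\C^{\chi_{a}})$, and the lens-space piece contributes nothing once $a\neq 0$.

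The main technical obstacle is the bookkeeping in part~(2): one must check that the restriction of $\chi_{a}$ to each copy of $E_{K}$ really does hit $\xi_{q}^{a}$ rather than some other root of unity dictated by the linking numbers in the cable pattern, and that no Wall non-additivity correction arises along the gluing tori. Once these restrictions are identified --- as in \cite[Section~2]{HeddenKirkLivingston} --- the additive formulas above drop out cleanly.
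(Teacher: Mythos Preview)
Your outline is essentially correct and is in fact a reconstruction, in this special case, of the results the paper simply cites: the paper invokes the Casson--Gordon surgery formula \cite[Lemma~3.1]{CassonGordon1} for part~(1) and \cite[Corollary~A.6, Proposition~A.10]{ConwayNagel} for part~(2), without reproducing any of the $4$-manifold or Mayer--Vietoris arguments you sketch. So the two approaches agree in substance; yours is more self-contained, the paper's is terser.

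One genuine correction is needed in your nullity computation for part~(1). The identification $\dim_{\C}H_{1}(\Sigma(K),\C^{\chi_{a}})=\ord_{\xi_{q}^{a}}\Delta_{K}$ is not valid as stated: the character $\chi_{a}$ lives on $H_{1}(\Sigma(T(2,q)))=H_{1}(L(q,1))\cong\Z_{q}$, and the resulting twisted homology of the lens space has no direct relation to the Alexander polynomial of $T(2,q)$ evaluated at $\xi_{q}^{a}$. (The Alexander polynomial governs the infinite cyclic cover of the knot exterior, which is a different cover of a different space.) The correct argument is simpler: $L(q,1)$ has a CW structure with a single cell in each dimension $0,1,2,3$, and tensoring the $\Z[\Z_{q}]$-equivariant chain complex with $\C^{\chi_{a}}$ for $a\neq 0$ gives boundary maps $1-\xi_{q}^{a}$, $0$, $1-\xi_{q}^{a}$, which is acyclic. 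Hence $H_{1}(L(q,1),\C^{\chi_{a}})=0$. Your conclusion stands; only the justification needs this fix. The Mayer--Vietoris nullity argument in part~(2) is fine, and your caveats about identifying the restricted character and checking Wall non-additivity are exactly the points handled in \cite{HeddenKirkLivingston} and \cite{ConwayNagel}.
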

\begin{proof}
  Consider first Item~\ref{item:sigma-inv}.
  Using the surgery formula~\cite[Lemma 3.1]{CassonGordon1} we obtain the desired formula for \(a \neq 0\).
  For \(a = 0\), we apply Item~\ref{item:properties-cs-invariants-1} of Lemma~\ref{prop:properties-cs-invariants}.

  The formula for the \(\sigma\)-invariant in Item~\ref{item:cabling} follows from~\cite[Corollary A.6]{ConwayNagel}.
  The formula for the nullity follows from~\cite[Proposition A.10]{ConwayNagel}.
  Although, the formula in~\cite[Proposition A.10]{ConwayNagel} is proved under the assumption that the winding number \(w\) is zero, the proof works without modification under the assumption that we work with double branched covers and \(w \equiv 0 \pmod{2}\).
\end{proof}

\section{Computer-aided calculations of Casson-Gordon invariants}
\label{sec:computations}

The purpose of this section is to explain our approach towards verification of Gilmer's criterion (Theorem~\ref{thm:genus-bounds}) for the knot from Theorem~\ref{thm:main-theorem}.
The main result of this section is Lemma~\ref{lemma:calculations}, whose proof was done with the aid of a computer.

Let \(K\) be the knot from Theorem~\ref{thm:main-theorem}.
Recall from Section~\ref{sec:link-forms-char}, that \(H_{1}(\Sigma(K))_{q}\) denotes the \(q\)-primary summand of \(H_{1}(\Sigma(K))\), for a prime \(q\).
By Lemma~\ref{lemma:character-vector-correspondence}, to any \(x \in H_{1}(\Sigma(K))_{q}\) we associate the character
\[\chi_{x} \colon H_{1}(\Sigma(K)) \to C_{q}, \quad \chi_{x}(y) = \exp\left(2 \pi i \lambda_{K}(x,y)\right).\]
The main result of this section is the following lemma.

\begin{lemma}\label{lemma:calculations}
  Let \(K\) denote the knot from Theorem~\ref{thm:main-theorem}.
  Fix \(q=83,103\).
  Let \(x \in H_{1}(\Sigma(K))_{q} \setminus \{0\}\) be an isotropic element, i.e., \(\lambda_{K}(x,x)=0\).
  There exists \(0<k<q\) such that for \(y = kx\) the following inequality is satisfied
  \[\left| \sigma(K,\chi_{y})  \right| > 5 + \eta(K,\chi_{y}). \]
\end{lemma}

The strategy of the proof is to first translate Lemma~\ref{lemma:calculations} into a purely algebraic problem, which can be solved by a computer.

\begin{problem}\label{problem:algebraic-problem}
  Fix a prime number \(q=83,103\) and let \(t\) be a positive integer relatively prime to~\(q\).
  For~\(V_{q} = \Z_{q}^{4}\) consider the following functions
  \[s_{q,t} \colon \Z_{q} \to \Q, \quad H_{q},\Sigma_{q} \colon V_{q} \to \Q, \quad Q_{q} \colon V_{q} \times V_{q} \to \Z_{q}\] given by
  \begin{itemize}
    \item
    \(s_{q,t}(x) =
    \begin{cases}
      -q + \frac{2x(q-x)}{q} + 2 \sigma_{T(2,t)}(\xi_{q}^{x}), & 0 < x < q, \\
      0, & x=0.
    \end{cases}\)
    \item \(\Sigma_{q}(x_{1},x_{2},x_{3},x_{4}) =
    \begin{cases}
      s_{83,17}(x_{1})-s_{83,11}(x_{2})-s_{83,13}(x_{3})+s_{83,1}(x_{4}), & q = 83, \\
      s_{103,11}(x_{1})-s_{103,17}(x_{2})-s_{103,17}(x_{3})+s_{103,13}(x_{4}), & q=103.
    \end{cases}
    \)
    \item \(Q_{q}(x,y) = x_{1}y_{1}-x_{2}y_{2}-x_{3}y_{3}+x_{4}y_{4}\),
  \end{itemize}
  where \(\sigma_{T(2,t)}\) denotes the Levine-Tristram signature function of the torus knot~\(T(2,t)\) and \(\xi_{q} = \exp\left( \frac{2 \pi i}{q} \right)\).
  Define
  \[L_{q} = \{x \in V_{q} \setminus \{0\} \colon Q_{q}(x,x) = 0\},\]
  and for any \(x \in L_{q}\) let
  \[M_{q}(x) = \{k \colon \left| \Sigma_{q}(kx) \right| > 8, \,\, 1 \leq k \leq q-1\}.\]
  Verify that for any \(x \in L_{q}\), \(M_{q}(x) \neq \emptyset\).
\end{problem}

\begin{lemma}\label{lemma:reformulation-CS-computation}
  Existence of a solution to Problem~\ref{problem:algebraic-problem}, for \(q=83\) and \(q=103\), implies that Lemma~\ref{lemma:calculations} holds.
\end{lemma}
\begin{proof}
  Lemma~\ref{lemma:branched-covers-cable-knots} implies that
  \[H_{1}(\Sigma(K))_{q} \cong
    \begin{cases}
      V_{q}, & q=83,103, \\
      0, & \text{otherwise}.
    \end{cases}
  \]
  Above we choose the identification \(H_{1}(\Sigma(K))_{q} \cong V_{q}\), for \(q=83,103\) such that the distinguished generators (see Item~2 of Lemma~\ref{lemma:branched-covers-cable-knots}) from \(H_{1}\) of
  double branched covers of the summands of \(K\) form a basis.
  Consider the primary decomposition of the linking form \(\lambda_{k}\), as in~\eqref{eq:primary-decomposition-linking-form},
  \[(H_{1}(\Sigma(K)),\lambda_{K}) \cong \bigoplus_{q \in \{83,103\}} (H_{1}(\Sigma(K))_{q},\lambda_{K,q}).\]
  With respect to this choice of basis, the primary summands \(\lambda_{K,q}\) can be identified with
  \[\lambda_{K,q} \cong
    \begin{cases}
      \ell_{q} \oplus -\ell_{q}, & q=83,103, \\
      0, & \text{otherwise},
    \end{cases}
  \]
  where
  \[\ell_{q} = (1/q) \oplus (-1/q),\]
  refer to Example~\ref{example:linking-forms-cyclic-groups} for the notation.
  In particular, under this identification, if \(x,y \in V_{q}\), then \(\lambda_{K}(x,y) = \lambda_{K,q}(x,y) = \frac{Q_{q}(x,y)}{q}\), for \(q=83,103\).
  Furthermore, by combining Proposition~\ref{prop:properties-cs-invariants} and Lemma~\ref{lemma:sigma-torus-knots}, we can see that for \(x \in V_{q}\) we have
  \[\sigma(K,\chi_{x}) = \Sigma_{q}(x), \quad \eta(K,\chi_{x}) = \# \{i \colon x_{i} \neq 0\},\]
  for \(x = (x_{1},x_{2},x_{3},x_{4})\).
  In particular, since \(x \neq 0\), it follows that
  \[\left| \eta(K,\chi_{x}) \right| \leq 3.\]
  Consequently, if \(k \in M_{q}(x)\), it follows that
  \[\left| \Sigma_{q}(kx) \right| > 8 \geq 5 + \eta(K,\chi_{kx}),\]
  hence \(y=kx\) satisfies the inequality in Lemma~\ref{lemma:calculations}.
  This concludes the proof.
\end{proof}




\begin{solution}[Computer-aided solution to Problem~\ref{problem:algebraic-problem}]
  The solution of the problem implemented in~\cite{sagecode} is basically a brute-force search with some optimizations.

  Fix \(q=83,103\).
  Let \(U(\Z_{q})\) denote the multiplicative group of units of \(\Z_{q}\).
  Observe that \(U(\Z_{q})\) acts on \(V_{q} \setminus \{0\}\) by multiplication.
  Denote by \(W_{q}\) the quotient of \(V_{q} \setminus \{0\}\) by the action of \(U(\Z_{q})\).
  Furthermore, for any \(x \in V_{q} \setminus \{0\}\), we will denote by \([x]\) its image under the canonical projection to \(W_{q}\).
  If \(y \in V_{q} \setminus \{0\}\), we say that \(y\) is in a \emph{canonical form} if the first nonzero coordinate of \(y\) is equal to one.
  If \(x \in L_{q}\) and \(y \in [x]\) is in canonical form, we say that \(y\) is a canonical representative of \([x]\).
  We make the following observations.
  \begin{enumerate}[(i)]
    \item If \(x \in L_{q}\), then the whole \(U(\Z_{q})\)-orbit of \(x\) belongs to \(L_{q}\).
    \item If \(x \in L_{q}\), then for any \(y\) in \([x]\), \(M_{q}(x)\) and \(M_{q}(y)\) are of the same cardinality.
    Indeed, write \(y = kx\), for some \(1 \leq k \leq q-1\), then
    \(M_{q}(y) = \{k^{-1} \cdot l \colon l \in M_{q}(x)\}\).
    \item For any \(x \in V_{q} \setminus \{0\}\), there exists a unique canonical representative \(x_{can} \in [x]\).
    \item Every vector \(x\) in a canonical form can be written uniquely in the form
    \begin{equation}
      \label{eq:canonical forms}
      x = (\underbrace{0,\ldots,0}_{3-k},1,x),
    \end{equation}
    where \(k=0,1,2,3\) and \(x \in V_{q}^{k}\) is arbitrary.
  \end{enumerate}
  The above observations suggest the following simplification: it is sufficient to check the condition in Problem~\ref{problem:algebraic-problem} only for vectors in canonical form.


  Hence, below we present a sketch of an algorithm for solving Problem~\ref{problem:algebraic-problem}.
  \begin{algorithm}
    \caption{The algorithm from~\cite{sagecode} solving Problem~\ref{problem:algebraic-problem}}
    \begin{algorithmic}[1]
      \For{$q \in\{83,103\}$}
      \For{$k=0,1,2,3$}
      \ForAll{$y \in V_{q}^{k}$} \label{alg:go-back-here}
      \State \(x \gets (\underbrace{0,\ldots,0}_{3-k},1,y)\) \Comment{Here we are generating all canonical representatives using~\eqref{eq:canonical forms}.}
      \If{\(Q_{q}(x,x) = 0\) in \(\Z_{q}\)}
      \For{\(l \in \{1,2,\ldots,q-1\}\)}
      \If{\(\left| \Sigma_{q}(lx) \right| > 8\)}
      \State Print \(x\) and \(l\) and go back to line~\ref{alg:go-back-here}.
      \EndIf
      \EndFor
      \State Print \(x\) and stop the program, since \(M_{q}(x) = \emptyset\).
      \EndIf
      \EndFor
      \EndFor
      \EndFor
    \end{algorithmic}
  \end{algorithm}

\end{solution}

\section{Genus bounds}
\label{sec:genus-bounds}

The purpose of this section is to give proof of Theorem~\ref{thm:main-theorem}.
Let us start with a lemma giving an upper bound for the four-genus of a certain infinite family of knots.

Let \(p_{1},p_{2},q_{1},q_{2},q_{3} \in \Z_{+}\) be odd primes and consider the knot
\begin{align*}
  K(p_{1},p_{2},q_{1},q_{2},q_{3}) &= 
  T(2,q_{1};2,p_{1}) \ \# \ -T(2,q_{2};2,p_{1}) \ \# \ 
  T(2,p_{1}) \ \# \ -T(2,q_{3};2,p_{1}) \\
     &\# \ T(2,q_{2};2,p_{2}) \ \# \ -T(2,p_{2}) \ \#  \ T(2,q_{3};2,p_{2}) \ \# \ -T(2,q_{1};2,p_{2}).
\end{align*}
Observe that taking \(p_{1}=83,p_{2}=103,q_{1}=17,q_{2}=11,q_{3}=13,\) we recover the knot \(K\) from Theorem~\ref{thm:main-theorem}.

\begin{lemma}\label{lemma:upper-bound}
  For any choice of positive odd integers~\(p_{1},p_{2}\) and non-negative odd integers~\(q_{1},q_{2},q_{3}\), we have
  \[g_{4}(K(p_{1},p_{2},q_{1},q_{2},q_{3})) \leq 2.\]
\end{lemma}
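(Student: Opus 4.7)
My plan is to bound the 4-genus by first simplifying $K$ up to smooth concordance using standard cable identities, and then constructing an explicit genus-two surface bounding the simplified knot from annuli associated with the slice disks of $L \# -L$.

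\textbf{Stage 1 (concordance simplification).} I would use the cabling-of-connect-sum identity
\[
  (A \# B)(2,p) \sim A(2,p) \# B(2,p) \# -T(2,p),
\]
valid up to smooth concordance, together with the mirror-cable identity
\[
  -\bigl(K(2,p)\bigr) \sim (-K)(2,p) \# -2T(2,p).
\]
Setting $J = T(2,q_{1}) \# -T(2,q_{2}) \# -T(2,q_{3})$, iterating both identities to absorb each block of four cables into a single cable and collecting the torus-knot corrections, I obtain
\[
  K \sim J(2,p_{1}) \# -J(2,p_{2}) \# T(2,p_{2}) \# -T(2,p_{1}).
\]
A signature computation at $\omega = -1$ verifies that the two sides agree, which is a consistency check. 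Since this reduction adds no genus, it suffices to bound the $4$-genus of the right-hand side by $2$.

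\textbf{Stage 2 (explicit surface).} For any knot $L$, the connect sum $L \# -L$ is slice, so the two-component link $L \sqcup -L$ bounds a properly embedded genus-zero annulus in $B^{4}$. Applying this to $L = J$ and $L = U$ yields annuli $A_{J}, A_{U} \subset B^{4}$. In the tubular neighborhoods of these annuli I would place the $(2,p_{1})$-cabling pattern near one end and the $(2,p_{2})$-cabling pattern near the other; the transitions between patterns are realized by saddle moves inside the solid-torus-times-interval neighborhoods. The four resulting cable boundaries on $S^{3}$ are joined into a single knot by three connect-sum bands, and this knot matches the Stage 1 representative. An Euler characteristic bookkeeping of all the pieces then confirms that the resulting surface is connected with a single boundary component and genus exactly two.

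\textbf{Main obstacle.} The delicate step is Stage 2: the transition between the $(2,p_{1})$- and $(2,p_{2})$-cabling patterns within a single annulus would naively cost on the order of $|p_{1}-p_{2}|/2$ units of genus, which for our parameters is $10$. The crucial geometric input is that the transitions on $A_{J}$ and $A_{U}$ can be coupled and routed through the connecting bands in a way that causes most of the handle cost to cancel, leaving only two handles in the final surface. Verifying that this coupling can actually be implemented in $B^{4}$, and that the genus count lands on exactly $2$ rather than a larger number, is the nontrivial content of the lemma.
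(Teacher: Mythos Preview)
Your Stage~2 is not a proof, and you say so yourself: the ``coupling'' that is supposed to cut the naive transition cost from $|p_1-p_2|/2$ handles per annulus down to a total of two handles is never described, and I do not see how to make it work along the lines you sketch. Inside the neighborhood of a fixed annulus the two strands of a $(2,p)$--cable are homotopically parallel, so each full twist you add or remove genuinely costs a saddle; there is no local mechanism by which saddles on $A_{J}$ cancel saddles on $A_{U}$, since those annuli sit in disjoint balls in $B^{4}$ and the connect-sum bands you add afterwards do not interact with the interiors of the annuli. In fact your own Stage~1 reduction shows $K\sim L_{1}\#-L_{2}$ with $L_{i}=J(2,p_{i})\#-T(2,p_{i})$, and since the paper's lower bound gives $g_{4}(K)=2$, any genus-two surface must realise a rather special cobordism from $L_{1}$ to $L_{2}$; nothing in your description singles it out. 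Separately, the Stage~1 identities you invoke (particularly $(A\#B)(2,p)\sim A(2,p)\#B(2,p)\#-T(2,p)$) are not standard concordance facts and would themselves need an argument --- cabling the obvious fission saddle from $A\#B$ to $A\sqcup B$ does not obviously produce a genus-zero cobordism.

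The paper's argument avoids the transition problem entirely by grouping the summands differently. Rather than pairing each cable with its mirror (which forces a change of cabling parameter $p_{1}\leftrightarrow p_{2}$ along each annulus), the paper performs two explicit band moves that pair summands sharing the \emph{same} parameter $p_{i}$: the result of the two bands is a single satellite (with a two-component pattern) of the visibly ribbon knot $T(2,q_{1})\#-T(2,q_{1})\#T(2,q_{2})\#-T(2,q_{2})\#T(2,q_{3})\#-T(2,q_{3})$, and cabling a ribbon disk for this companion gives a concordance on to the unlink. The two band moves account for exactly the two handles. This is the idea you are missing: organise the ribbon moves so that the cabling parameter is constant along each piece of the cobordism, and let the companions --- not the patterns --- do the cancelling.
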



\begin{figure}
  \fontsize{8}{5}\selectfont
  \centering{

    \resizebox{0.9\textwidth}{!}{
      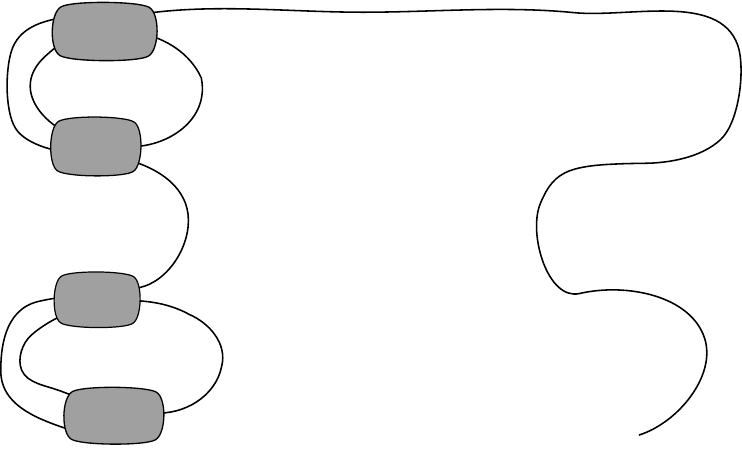}
  }
  \caption{Band moves (indicated in red) giving a cobordism from the knot \(J(r_{1},r_{2},s_{1},s_{2})\) to the \((2,0)\)-cable of the knot \(J'\), see~\eqref{eq:knot-J-prime}.}
  \label{fig:ribbon-moves}
\end{figure}

\begin{proof}
  Consider the knot
  \[J(r_{1},r_{2},s_{1},s_{2}) = T(2,s_{1};2,r_{2}) \ \# \ -T(2,s_{1};2,r_{1}) \ \# \ T(2,s_{2};2,r_{2}) \ \# \ -T(2,s_{2};2,r _{1}),\]
  where \(r_{1},r_{2},s_{1},s_{2}\) are odd integers such that \(r_{1},r_{2}>2\)
  Observe, than we can write
  \[K(p_{1},p_{2},q_{1},q_{2},q_{3}) = J_{1}(p_{1},p_{2},q_{1},0) \# J_{2}(p_{2},p_{1},q_{2},q_{3}).\]
  We will prove that \(g_{4}(J(r_{1},r_{2},s_{1},s_{2})) \leq 1\).
  This would imply that
  \[g_{4}(K(p_{1},p_{2},q_{1},q_{2},q_{3})) \leq g_{4}(J_{1}(p_{1},p_{2},q_{1},0)) + g_{4}(J_{2}(p_{2},p_{1},q_{2},q_{3})) \leq 2,\]
  as desired.

  To prove that \(g_{4}(J(r_{1},r_{2},s_{1},s_{2})) \leq 1\), we will use a modification of the argument from~\cite[Section 6]{HeddenKirkLivingston}.
  We will construct an explicit genus-one null-cobordism for \(J := J(r_{1},r_{2},s_{1},s_{2})\) in \(D^{4}\).
  Start by performing on~\(J\) band moves depicted in Figure~\ref{fig:ribbon-moves}.
  As a result, we obtain a genus-one cobordism \(S_{1}\) from \(J\) to the \(2\)-component link \(L_{0}\), which is isotopic to the \((2,0)\)-cable of the knot
  \begin{equation}
    \label{eq:knot-J-prime}
    J' = T(2,r_{2}) \# -T(2,r_{2}) \# T(2,r_{1}) \# -T(2,r_{1}).
\end{equation}
  Since \(J'\) is a slice knot (in fact a ribbon knot), let \(S'\) be the concordance from~\(J'\) to the unknot.
  Let \(S_{2}\) be the \((2,0)\)-cable of \(S'\), i.e., \(S_{2}\) is the union of two-parallel copies of \(S'\) contained in some small tubular neighborhood of \(S'\).
  The surface \(S_{2}\), diffeomorphic to a disjoint union of two annuli, is a concordance from \(L_{0}\) to the two-component unlink.
  By joining~\(S_{1}\) and~\(S_{2}\) along~\(L_{0}\) we obtain a connected, genus-one cobordism \(S\) from \(J\) to the two-component unlink. By capping off \(S\) we obtain the desired null-cobordism for \(J\).
     
\end{proof}


We are now ready to prove Theorem~\ref{thm:main-theorem}.
\begin{proof}[Proof of Theorem~\ref{thm:main-theorem}]
  Observe that by~\cite[Theorem 1.1 and Proposition 5.4]{conway2019nonslice}, the knot \(K:=K(83,103,17,11,13)\) from Theorem~\ref{thm:main-theorem} is algebraically slice but not slice.
  The crucial point here, is that the primes $p_1=83$, $p_2=103$, $q_1=11$, $q_2=13$, $q_3=17$ are pairwise distinct.
  By Lemma~\ref{lemma:upper-bound}, \(1 \leq g_{4}^{top}(K) \leq g_{4}(K) \leq 2\).

  
  Suppose that \(g_{4}^{top}(K)=1\), we will show that this assumption contradicts Lemma~\ref{lemma:calculations}.
  Before delving into the proof, recall from Section~\ref{sec:computations} that
  \begin{equation}
    \label{eq:H1-and-linking-form}
    H_{1}(\Sigma(K)) = \Z_{83}^{4} \oplus \Z_{103}^{4}, \quad \lambda_{K} \cong (\ell_{83} \oplus -\ell_{83}) \oplus (\ell_{103} \oplus -\ell_{103}),
  \end{equation}
  where \(\ell_{q} = \left( 1/q \right) \oplus \left( -1/q \right)\).
  Since \(K\) is algebraically slice, the signature of \(K\) is zero, hence by Theorem~\ref{thm:genus-bounds}, there is a decomposition \(\lambda_{K} = \beta_{1} \oplus \beta_{2}\), where
  \begin{itemize}
  \item \(\beta_{1}\) is represented by a square integral matrix \(A\) of size \(2 \times 2\) and signature zero,
  \item \(\beta_{2}\) is metabolic, and there is a metabolizer \(L\) for \(\beta_{2}\), such that for any \(x \in L\), we have
    \begin{equation}
      \label{eq:Gilmer-condition}
      \left| \sigma(K,\chi_{x}) \right| \leq 5 + \eta(K,\chi_{x}).
    \end{equation}
  \end{itemize}

  We will split the argument into two cases: either \(\beta_{2} \neq 0\), or \(\beta_{2} = 0\).
  Let us first consider the case \(\beta_{2} \neq 0\).
  Since \(L\) is a metabolizer for \(\beta_{2}\), by Lemma~\ref{lemma:primary-decomposition-metabolizers}, it can be written in the form \(L = L_{83} \times L_{103}\), where \(L_{p}\) is a metabolizer for the \(p\)-primary part \((\beta_{2})_{p}\) of \(\beta_{2}\), for \(p=83,103\).
  Since \(\beta_{2} \neq 0\), we can assume without loss of generality, that \((\beta_{2})_{83} \neq 0\).
  In particular, \(L_{83} \neq 0\).
  Fix any \(x \in L_{83} \setminus \{0\}\), Lemma~\ref{lemma:calculations} shows that for some multiple \(y = kx \in L_{83}\), with \(1 \leq k < 83\), the condition~\eqref{eq:Gilmer-condition} is not satisfied.
  Hence, we reached a contradiction, which shows that the condition  \(\beta_{2} \neq 0\) cannot be satisfied.
  
  If \(\beta_{2} = 0\), then \(\lambda_{K}\) is represented by a symmetric integral matrix \(A\) of size \(2 \times 2\) and signature zero.
  In particular, \(A\) is a presentation matrix for \(H_{1}(\Sigma(K))\), i.e.,
  \[H_{1}(\Sigma(K)) \cong \Z^{2} / A\Z^{2}.\]
  However, the above isomorphism is not possible, since the minimal number of generators for \(H_{1}(\Sigma(K))\) is four, see~\eqref{eq:H1-and-linking-form}.
  This shows that neither condition \(\beta_{2} \neq 0\) or \(\beta_{2} = 0\) can hold.
  Consequently, our assumption \(g_{4}^{top}(K) = 1\) lead us to a contradiction, hence \(g_{4}^{top}(K) = g_{4}(K) = 2\) and the theorem is proved.
\end{proof}

\begin{remark}\label{remark:optimizations}
  In principle, verifying assumptions of Theorem~\ref{thm:genus-bounds} requires to check all isotropic elements of \(H_{1}(\Sigma(K))\).  Since the order of \(H_{1}(\Sigma(K))\) is \(N=83 \cdot 103 = 8549\),
  there are roughly \(N^{3} \approx 6 \cdot 10^{12}\) non-zero isotropic elements in \(H_{1}(\Sigma(K))\).
  Indeed, according to~\cite[Chapter 3.7.2]{wilsonFiniteSimpleGroups2009}, the \(q\)-primary summand of \(H_{1}(\Sigma(K))\) contains roughly~\(q^{3}\) isotropic vectors, for \(q=83,103\).
  Furthermore, by modifying slighlty the argument from the proof of Lemma~\ref{lemma:primary-decomposition-metabolizers}, we can see that every isotropic element~\(x\) of \(H_{1}(\Sigma(K))\) can be written as
  a sum~\(x = x_{83}+x_{103}\), where, for \(q=83,103\),~\(x_{q}\) is isotropic and belongs to \(H_{1}(\Sigma(K))_{q}\).

  Lemma~\ref{lemma:calculations} and the proof of Theorem~\ref{thm:main-theorem} show that instead of checking all isotropic vectors, it is sufficient to check only isotropic vectors belonging to primary
  summands of the linking form.
  Restricting to primary parts, reduces the number of possibilities to check roughly \(83^{3} + 103^{3} \approx 1.6 \cdot 10^{6}\).
\end{remark}

\bibliographystyle{amsalpha}
\bibliography{BiblioAlgebraicKnots}
\end{document}